\newtheorem{thm}{Theorem}
\newtheorem{cor}{Corollary}
\newtheorem{lemma}{Lemma}
\newtheorem{prop}{Proposition}
\theoremstyle{definition}\newtheorem{prob}{Problem}
\theoremstyle{plain}
\theoremstyle{remark}
\newcommand{\bmat}{\left[ \begin{matrix}}
	\newcommand{\emat}{\end{matrix} \right]}
\newcommand{\trace} {\mbox{\rm tr}}
\newcommand{\E}{{\mathbb E}}
\newcommand{\Rbb}{\mathbb R}
\newcommand{\Dbb}{\mathbb D}
\newcommand{\Cbb}{\mathbb C}
\newcommand{\Zbb}{\mathbb Z}
\newcommand{\Tbb}{\mathbb T}
\newcommand{\range}{\mathrm{Range}}
\newcommand{\Hfrak}{\mathfrak{H}}
\newcommand{\Sscr}{\mathscr{S}}
\newcommand{\Lscr}{\mathscr{L}}
\begin{document}
\title{On the Existence of a Solution to a Spectral Estimation Problem \emph{\`a la} Byrnes-Georgiou-Lindquist
\thanks{The first author was supported by the China Scholarship Council (CSC) under File No.~201506230140.} 
}
\author{
Bin Zhu and Giacomo Baggio
\thanks{B. Zhu and G. Baggio are with the
Department of Information Engineering, University of Padova, via
Gradenigo 6/B, 35131 Padova, Italy; e-mail:  
{\tt zhubin@dei.unipd.it, baggio@dei.unipd.it}
}}
\markboth{}
{Constrained Spectral Estimation}
\maketitle

\begin{abstract}
A parametric spectral estimation problem in the style of Byrnes, Georgiou, and Lindquist was posed in \cite{FPZ-10}, but the existence of a solution was only proved in a special case. Based on their results, we show that a solution indeed exists given an arbitrary matrix-valued prior density. The main tool in our proof is the topological degree theory.
\end{abstract}

\begin{IEEEkeywords}
Approximation of spectral densities, spectral estimation, generalized moment problems, topological degree theory, covariance extension.
\end{IEEEkeywords}
 
 
\section{Introduction}


This note concerns a spectral estimation problem subjected to a generalized moment constraint. {The setup of the problem (scalar version) was first introduced by Byrnes, Georgiou, and Lindquist in \cite{BGL-THREE-00}, and then further elaborated in \cite{Georgiou-L-03} in order to allow for an \emph{a priori} information. This formulation, known under the name of \emph{THREE-like spectral estimation}, has now become nearly standard and includes as special cases some important problems in the field of systems and control such as \emph{covariance extension} (cf. e.g., \cite{Kalman,Gthesis,Georgiou-87,BLGM-95,BGL-98,SIGEST-01}) and \emph{Nevanlinna--Pick interpolation} (cf. \cite{Georgiou-87-NP,BGL-01,BLN-03} and references therein).}

It is worth pointing out that moment problems \cite{KreinNudelman,Grenander_Szego} form a special class of inverse problems that are typically \emph{not} well-posed in the sense of Hadamard. To remedy this, the mainstream approach today is to first define an {entropy-like} distance index $d(\Phi,\Psi)$ between two bounded and coercive spectral densities, and then to find the ``best'' $\Phi$ given the prior $\Psi$ by minimizing the distance index subjected to the generalized moment constraint. Still, it is not trivial to solve such an optimization problem. {Indeed, although the dual problem is typically convex, the dual variable (i.e., the Lagrange multiplier) is a Hermitian matrix that lives in an \emph{open, unbounded} domain and this usually gives rise to a number of numerical issues. With reference to the scalar case, results in this direction include the aforecited \cite{Georgiou-L-03}, in which the chosen distance index is the Kullback--Leibler divergence (cf. also \cite{PavonF-06,FPR-07,FRT-11,B17}), and \cite{Z14rat}, where a general family of divergences (the Alpha divergence family) is considered. In the multivariate case, the problem becomes much more challenging and its feasibility strongly depends on the selected distance. We mention, in particular, the papers \cite{Georgiou-06}, where a multivariate extension of the Kullback--Leibler divergence, the quantum relative entropy, is considered; \cite{FPR-08,RFP-09}, which deal with a sensible generalization of the Hellinger distance; and \cite{FMP-12,GL-17}, where the selected distance index coincides with the multivariate Itakura--Saito distance. It is worth remarking that the latter two approaches lead to rational solutions with bounded McMillan degrees when the prior is rational. Finally, \cite{Z14} and \cite{Z15} introduce two more general frameworks based on the notion of Beta and Tau divergence families, wherein the multivariate Kullback--Leibler and Itakura--Saito distance can be recovered as particular cases.}


There are a few attempts in directions different from optimization; see \cite{FPZ-10,PZ-converge-17,Picci-Z-017}. In particular, a parametric family of spectral densities was introduced in \cite{FPZ-10}, and a certain map from the parameter space to the space of (generalized) moments was studied in the light of a Hadamard-type global inverse function theorem \cite{BL_global_inverse-07}. {The proposed parametrization has been shown to be amenable for the implementation of a matricial version of an extremely simple and efficient fixed-point algorithm introduced in \cite{PavonF-06}, whose convergence properties have been investigated in \cite{FPR-07,FRT-11,B17}.} However, the result in \cite{FPZ-10} was not satisfactory because the authors only showed that a solution exists when the prior $\Psi$ has a very special structure. In fact, this is the motivation of the current note. As a continuation of the work in \cite{FPZ-10}, here we will show that a solution to the parametric spectral estimation problem exists given \emph{any} fixed matrix-valued prior density that is bounded and coercive. {Of course the problem is still open to a large extent since uniqueness of the solution (and, {\em a fortiori}, well-posedness of the problem) is not known, and the convergence properties of the algorithm proposed in \cite{FPZ-10} to compute a solution are yet to be examined.}

The main machinery behind our existence proof is the \emph{topological degree theory} from nonlinear analysis. As a historical remark, Georgiou was the first to apply the degree theory to rational covariance extension \cite{Georgiou-87,Georgiou-87-NP,Gthesis} to show existence of a solution, and it was further developed by Byrnes, Lindquist, and coworkers \cite{BLGM-95} to prove the uniqueness and well-posedness. These theories were established before the discovery of the cost function in the optimization framework \cite{BGL-01,BGL-98,SIGEST-01}, which was later called generalized entropy criterion.


The outline of this note is as follows.
In Section \ref{SecProb}, we first set up some notations before reviewing the problem formulation. The important special case of multivariate covariance extension is detailed for illustration.
Our main result is presented in Section~\ref{SecExist}. A part of the degree theory is reviewed in order to carry out our proof.
{We conclude with some open questions on the uniqueness of the solution 
and convergence of an algorithm leading to such a solution.}

\subsection*{List of symbols}

\begin{itemize}
    	\item $\E$, mathematical expectation.
    	\item $\Zbb$, the set of integers.
    	\item $\Cbb$, the complex {plane}.
    	\item $\Dbb$, the open complex unit disk $\{z\in\Cbb:\,|z|<1\}$.
    	\item $\Tbb\equiv\partial\Dbb$, the unit circle, where $\partial$ stands for the boundary.
    	\item {$\mathrm{GL}(n,\Cbb)$, group of $n\times n$ invertible complex matrices}.
	\item $\Hfrak_{n}$, the vector space of $n\times n$ Hermitian matrices.
	\item $\Hfrak_{+,n}$, the subset of $\Hfrak_{n}$ that contains positive definite matrices.
	\item $C(\Tbb;\Hfrak_m)$, the space of $\Hfrak_{m}$-valued continuous functions on $\Tbb$.
	\item $\Sscr_m$, the family of $\Hfrak_{+,m}$-valued functions defined on $\Tbb$ that are bounded and coercive.
    	\item {$(\cdot)^{*}$, complex conjugate transpose. When considering a rational matrix-valued function $G(z)$,  $G^{*}(z)$ stands for the analytic continuation of the function that for $z\in\Tbb$ equals the complex conjugate transpose of $G(z)$.}
	\item {$(\cdot)^{-*}$, shorthand for $[(\cdot)^{-1}]^{*}$.}
\end{itemize}

\section{A multivariate spectral estimation problem}\label{SecProb}



Consider a linear system with a state-space representation
\begin{equation}\label{filter_bank}
x(t+1)=Ax(t)+By(t),
\end{equation}
where $A\in\Cbb^{n\times n}$ is Schur stable, i.e., has all its eigenvalues in $\Dbb$, $B\in\Cbb^{n\times m}$ is of full column rank ($n\geq m$). Moreover, the pair $(A,B)$ is assumed to be \emph{reachable}. The input process $y(t)$ is zero-mean wide-sense stationary with an unknown spectral density matrix $\Phi(z)$. The transfer function of (\ref{filter_bank}) is just
\begin{equation}\label{trans_func}
G(z)=(zI-A)^{-1}B,
\end{equation}
which can be interpreted as a bank of filters. {An estimate of the steady-state covariance matrix $\Sigma:=\E\{x(t)x(t)^*\}$ of the state vector $x(t)$ is assumed to be known. (For the problem of estimating covariance matrices in this setting, we refer to \cite{Zorzi-F-12, FPZ-12,ning2013geometry}).} Hence we have
\begin{equation}\label{mmt_constraint}
\int G\Phi G^*=\Sigma,
\end{equation}
where the function is integrated on $\Tbb$ with respect to the normalized Lebesgue measure $\frac{\mathrm{d}\theta}{2\pi}$. This notation will be adopted throughout the note.

Given the matrix $\Sigma\in\Hfrak_{+,n}$, we want to estimate the spectral density $\Phi$ such that the generalized moment constraint (\ref{mmt_constraint}) is satisfied. For example, consider the following choice of the matrix pair $(A,B)$:
\begin{equation}
A=\bmat 0&I_m&0&\cdots&0 \\ 
0&0&I_m&\cdots&0 \\
\vdots&\vdots& &\ddots&\vdots \\
0&0&0&\cdots&I_m \\
0&0&0&\cdots&0\emat, \quad
B=\bmat 0 \\ 0 \\ \vdots \\ 0 \\ I_m\emat.
\end{equation}
Here each block in $A$ or $B$ is of $m\times m$ and $A$ is a $(p+1)\times(p+1)$ block matrix while $B$ is a $(p+1)$-block column vector. It is easy to verify that in this case
\begin{equation}\label{trans_func_covext}
G(z)=(zI-A)^{-1}B=\bmat z^{-p-1}I_m \\ z^{-p}I_m \\ \vdots \\z^{-1}I_m \emat,
\end{equation}
Symbolically, the steady state vector
\begin{equation}
x(t)=G(z)y(t)=\bmat y(t-p-1) \\ \vdots \\ y(t-2) \\ y(t-1) \emat,
\end{equation}
and the covariance matrix $\Sigma$ has a block-Toeplitz structure, i.e.,
\begin{equation}
\Sigma=\bmat \Sigma_0&\Sigma_1^*&\Sigma_2^*&\cdots&\Sigma_p^* \\
\Sigma_1&\Sigma_0&\Sigma_1^*&\cdots&\Sigma_{p-1}^* \\
\Sigma_2&\Sigma_1&\Sigma_0&\cdots&\Sigma_{p-2}^* \\
\vdots&\vdots&\ddots&\ddots&\vdots \\
\Sigma_p&\Sigma_{p-1}&\cdots&\Sigma_1&\Sigma_0\emat,
\end{equation}
where $\Sigma_k:=\E\{y(t+k)y(t)^*\}\in\Cbb^{m\times m}$ with a slight abuse of notation.
In fact, the constraint (\ref{mmt_constraint}) is equivalent to the set of \emph{moment equations}
\begin{equation}\label{mmt_eqns}
\int_{-\pi}^{\pi}e^{jk\theta}\Phi(e^{j\theta})\frac{d\theta}{2\pi}=\Sigma_k,\quad k=0,1,\dots,p.
\end{equation}
To find a spectral density $\Phi$ satisfying (\ref{mmt_eqns}) is the classical \emph{covariance extension problem} \cite{Grenander_Szego}. 

In general, existence of $\Phi\in\Sscr_m$ satisfying (\ref{mmt_constraint}) is not trivial. Such feasibility problem was addressed in \cite{Georgiou-02,georgiou2002spectral}, see also \cite{FPZ-10,FPZ-12,Zorzi-F-12,FPR-07,FPR-08,RFP-09,RFP-10-well-posedness,FMP-12}. In order for $\Sigma>0$ to be a state covariance, a certain Lyapunov-like equation has to be solvable or an equivalent rank condition must hold. Interested readers can consult the references for details. Here we shall take the feasibility as a standing assumption. More precisely, let us define the linear operator $\Gamma\colon C(\Tbb;\Hfrak_m)\to\Hfrak_n$ as
\begin{equation}
\Gamma\colon\Phi\mapsto\int G\Phi G^*.
\end{equation}
Then we assume that the covariance matrix $\Sigma\in\range\,\Gamma$. Various properties of the set $\range\,\Gamma$ are elaborated in e.g., \cite[Sec.~III]{FPZ-12}. In particular, by Proposition 3.1 of that paper, $\range\,\Gamma\subset\Hfrak_n$ is a linear space with real dimension $m(2n-m)$. 



Moreover, define the set
\begin{equation}
\Lscr_+:=\{\Lambda\in\Hfrak_{n}\;:\;G^*(z)\Lambda G(z)>0,\ \forall z\in\Tbb\}.
\end{equation}
By the continuous dependence of eigenvalues on the matrix entries, one can verify that $\Lscr_+$ is an open subset of $\Hfrak_{n}$. For $\Lambda\in\Lscr_+$, take $W_\Lambda$ as the unique stable and minimum phase {(right)} spectral factor of $G^*\Lambda G$ \cite[Lemma 11.4.1]{FPZ-10}, i.e.,
\begin{equation}\label{W_Lambda}
G^*\Lambda G=W_\Lambda^*W_\Lambda.
\end{equation}
Our problem is formulated as follows.

\begin{prob}\label{spec_estimation}
	Given the filter bank $G(z)$ in (\ref{trans_func}), let $\Sigma\in\range_+\Gamma:=\range\,\Gamma\,\cap\Hfrak_{+,n}$ and $\Psi\in\Sscr_m$. Find $\Lambda\in\Lscr_+$ such that 
	\begin{equation}\label{Phi_Gamma}
		\Phi_\Lambda:=W_\Lambda^{-1}\Psi W_\Lambda^{-*}
	\end{equation}
	satisfies
	\begin{equation}\label{constraint}
		\int G\Phi_\Lambda G^*=\Sigma.
	\end{equation}
\end{prob}

Define $\Lscr_+^\Gamma:=\Lscr_+\cap\range\,\Gamma$, and consider the map $\omega\colon\Lscr_+^\Gamma\to\range_+\Gamma$ given by
\begin{equation}\label{map_omega}
\omega\colon\Lambda\mapsto\int G\Phi_\Lambda G^*.
\end{equation}
As indicated in \cite{FPZ-10} and will be clear in Subsection \ref{subsec:proof}, this is a continuous map between open subsets of the linear space $\range\,\Gamma$, and Problem \ref{spec_estimation} is feasible if and only if $\omega$ is surjective. Theorem 11.4.3 in \cite{FPZ-10} guarantees such surjectivity when the prior is a scalar density times a positive definite matrix. In the next section, we shall extend that result to accommodate an arbitrary matrix spectral density $\Psi$.

\section{Existence of a solution}\label{SecExist}


The proof of our main result relies on the notion of topological degree of a continuous map. The degree theory forms an important part of differential topology and is closely related to fixed-point theory, cf. \cite[Ch. I]{OR_degree} for a rather informative historical account. In particular, the degree theory is a powerful tool to prove existence of a solution to a system of nonlinear equations. There are several versions of the theory for different types of maps. Although the maps that we consider in this note are between open subsets of the Euclidean space, we shall use the more general degree theory for continuous maps between smooth, connected, boundary-less manifolds. Some main points of the theory are reviewed below.

\subsection{A short review of the degree theory}
We mainly follow the lines of \cite[Ch.~III]{OR_degree}. Suppose $U,\,V\subset\Rbb^n$ are open and connected, and $f\colon U\to V$ is a {\em proper} $C^1$ function. Recall that $f$ is called proper if the preimage of every compact set in $V$ is compact in $U$. Our major concern is solvability of the equation
\begin{equation}
	f(x)=y.
\end{equation}
A point $y\in V$ is called a regular value of $f$ if either
\begin{enumerate}[(i)]
	\item for any $x\in f^{-1}(y)$, $\det f'(x)\neq0$  or
	\item $f^{-1}(y)$ is empty.
\end{enumerate}
Here $f^{-1}(y)$ denotes the preimage of $y$ under $f$, i.e., the set
\[\{x\in U\,:\,f(x)=y\},\]
and $f'(x)$ denotes the Jacobian matrix of $f$ evaluated at $x$.
Let $y$ be a regular value of type (i), and the degree of $f$ at $y$ is defined as
\begin{equation}\label{def_degree}
	\deg(f,y):=\sum_{f(x)=y}\mathrm{sign}\,\det f'(x),
\end{equation}
where the sign function
\[\mathrm{sign}(x)= \left\{ \begin{array}{ll}
1 & \textrm{if $x>0$} \\
-1 & \textrm{if $x<0$} \\
\end{array} \right.\]
and not defined at $0$.

Throughout this note, properness will be a crucial property of our function. Since $f$ is proper, one can show that the preimage $f^{-1}(y)$ is finite following the classical inverse function theorem, and hence the sum above is well defined. For regular values of type (ii), we set $\deg(f,y)=0$. Moreover, the set of regular values is dense in $V$ by Sard--Brown Theorem \cite[p.~63]{OR_degree}. 
Further properties of the degree related to our problem are listed below:
\begin{itemize}
	\item The degree of $f$ at $y$ does not depend on the choice of regular value. Therefore, we can define the degree of $f$ as
	\[\deg(f)=\deg(f,y)\]
	for any regular value $y$.
	\item If $\deg(f)\neq0$, then for any $y\in V$, there exists $x\in U$ such that $f(x)=y$, that is, the map $f$ is surjective. A proof of this fact can be found in \cite[p.~1849]{BLGM-95}.
	\item Homotopy invariance. If $H\colon U\times [0,1]\to V,\ (x,t)\mapsto y$ is jointly continuous in $(x,t)$ and proper, then $\deg(H_t,y)$ is defined and independent of
	$t\in[0,1]$. Here $H_t\colon U\to V$ is defined by $H_t(x)=H(x,t)$.
\end{itemize}

One important point of theory is that degree can be defined for continuous functions through approximation by smooth functions\cite[Proposition and Definition~3.1, p.~111]{OR_degree}, and (\ref{def_degree}) is just a way of computing it in the special case of $C^1$ \cite[Remark p.~71]{JTSchwartz}. In particular, the homotopy invariance of the degree holds in the continuous case \cite[Proposition~3.4, p.~112]{OR_degree}.


\subsection{Proof of existence}\label{subsec:proof}

Our main theorem will be preceded by some lemmas. Take $\Psi=I$ the identity matrix, and the map $\omega$ would reduce to
\begin{equation}\label{omega_tilde}
	\begin{split}
		\tilde{\omega}\colon \Lscr_+^\Gamma &\to\range_+\Gamma \\
		\Lambda & \mapsto\int G(G^*\Lambda G)^{-1} G^*.
	\end{split}
\end{equation}

\begin{lemma}\label{lem_smooth_omega_tilde} 
	The map $\tilde{\omega}$ is continuously differentiable.
\end{lemma}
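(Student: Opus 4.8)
\emph{The plan} is to realize $\tilde\omega$ as a parameter-dependent integral over the \emph{compact} circle $\Tbb$ whose integrand is a smooth function of $\Lambda$, and then to move differentiation inside the integral. Since $\Lscr_+^\Gamma$ and $\range_+\Gamma$ are open subsets of the finite-dimensional real vector space $\range\,\Gamma$, it will be enough to show that the map
\[
\Lscr_+\to\Hfrak_n,\qquad \Lambda\mapsto\int G(G^*\Lambda G)^{-1}G^*,
\]
defined on the open set $\Lscr_+\subset\Hfrak_n$, is smooth; the smoothness of $\tilde\omega$ then follows by restricting to the linear subspace $\range\,\Gamma$, its values lying in $\range_+\Gamma$ because $(G^*\Lambda G)^{-1}\in\Sscr_m$ and hence the integral equals $\Gamma\bigl((G^*\Lambda G)^{-1}\bigr)$.

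First I would check smoothness of the integrand in both variables. As $A$ is Schur stable, $G(z)=(zI-A)^{-1}B$ has its poles in $\Dbb$ and the poles of $G^*$ lie outside $\overline{\Dbb}$, so that neither $G$ nor $G^*$ has a pole on $\Tbb$; both are therefore continuous, hence bounded, on the compact set $\Tbb$. For fixed $z\in\Tbb$ the assignment $\Lambda\mapsto G^*(z)\Lambda G(z)$ is \emph{linear}, so each entry of $G^*(z)\Lambda G(z)$ is a linear function of the real coordinates of $\Lambda$ with coefficients continuous in $z$. On $\Lscr_+$ this matrix is positive definite, in particular it lies in $\mathrm{GL}(m,\Cbb)$, on which inversion is real-analytic (with differential $d(M^{-1})=-M^{-1}(dM)M^{-1}$, and entries rational in those of $M$ by Cramer's rule). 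Composing with the fixed factors $G(z),G^*(z)$, the integrand
\[
F(\Lambda,z):=G(z)\bigl(G^*(z)\Lambda G(z)\bigr)^{-1}G^*(z)
\]
is jointly continuous in $(\Lambda,z)$ and, for each fixed $z$, a smooth (indeed rational) function of $\Lambda$ whose only possible singularities occur where $\det\bigl(G^*(z)\Lambda G(z)\bigr)=0$.

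Next I would justify differentiation under the integral sign. Fixing $\Lambda_0\in\Lscr_+^\Gamma$ and a compact neighborhood $K\subset\Lscr_+^\Gamma$ of it, joint continuity and positive definiteness force the continuous map $(\Lambda,z)\mapsto\det\bigl(G^*(z)\Lambda G(z)\bigr)$ to attain a strictly positive minimum on the compact set $K\times\Tbb$; hence $F$ together with all of its $\Lambda$-derivatives (again rational in $\Lambda$ with the same nonvanishing denominator) are continuous, so uniformly bounded, on $K\times\Tbb$. The classical theorem on integrals depending on a parameter then yields that $\Lambda\mapsto\int F(\Lambda,\cdot)$ is of class $C^\infty$ on the interior of $K$, with derivatives computed by integrating those of $F$; as $\Lambda_0$ was arbitrary, $\tilde\omega$ is smooth on $\Lscr_+^\Gamma$. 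The only step demanding real care is this uniform lower bound on $\det(G^*\Lambda G)$ over compact parameter sets, which is precisely what licenses the interchange of differentiation and integration; everything else is a routine composition of smooth maps.
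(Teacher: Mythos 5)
Your proof is correct and follows essentially the same route as the paper's: smoothness of matrix inversion via Cramer's rule, composition with the (linear in $\Lambda$, pole-free on $\Tbb$) map $\Lambda\mapsto G^*\Lambda G$, and Leibniz's rule to differentiate under the integral sign. The only difference is that you explicitly supply the uniform lower bound on $\det\bigl(G^*(z)\Lambda G(z)\bigr)$ over compact parameter sets that licenses the interchange of differentiation and integration, a hypothesis the paper's proof leaves implicit.
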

\begin{proof}
	The map
	\begin{equation}\label{mat_inverse}
	\mathrm{GL}(n,\Cbb)\to\mathrm{GL}(n,\Cbb)\, :\, X\mapsto X^{-1}
	\end{equation}
	is smooth, which follows from Cramer's rule in linear algebra. Hence, the function $\tilde{F}_\Lambda(e^{j\theta}):=G(G^*\Lambda G)^{-1} G^*$ inside the integral of (\ref{omega_tilde}) is also smooth in $\Lambda$. Moreover, since $G$ is a rational function, all the partial derivatives of $\tilde{F}_\Lambda(e^{j\theta})$ with respect to $\Lambda$ are continuous in $\theta$ (and $\Lambda$). Then by Leibniz's rule for differentiation under the integral sign, partial derivatives of $\tilde{\omega}$ of all orders exist.

	Next, we show that the first order partial derivatives are continuous. For the time being, let us consider the map $\tilde{\omega}$ defined on $\Lscr_+$. (We made the domain restricted to the intersection with $\range\,\Gamma$ out of the consideration of dimensionality.) From \cite{brookes2005matrix}, the differential of the map (\ref{mat_inverse}) at $X$ is given by
	\begin{equation*}
	\Cbb^{n\times n}\to\Cbb^{n\times n}\,:\, V\mapsto-X^{-1}VX^{-1}.
	\end{equation*}
	Using this fact, the differential of $\tilde{\omega}$ at $\Lambda\in\Lscr_+$ is
	\begin{equation}\label{diff_omega_tilde}
	\delta\Lambda\mapsto\delta\tilde{\omega}_\Lambda=-\int G(G^*\Lambda G)^{-1}(G^*\delta\Lambda G)(G^*\Lambda G)^{-1}G^*
	\end{equation}
	such that $\delta\Lambda\in\Hfrak_n$ and $\Lambda+\delta\Lambda$ stays in $\Lscr_+$. Let us denote the integrand in (\ref{diff_omega_tilde}) by $\delta\tilde{F}_{\Lambda,\delta\Lambda}(e^{j\theta})$. For a fixed $\delta\Lambda$, one can see that the differential $\delta\tilde{\omega}_\Lambda(\delta\Lambda)$ is continuous w.r.t. $\Lambda$. To see this fact, let a sequence $\{\Lambda_k\}_{k\geq1}\subset\Lscr_+$ converge to some $\bar{\Lambda}\in\Lscr_+$ as $k\to\infty$. Notice that, the corresponding sequence of matrix-valued functions $\{G^{*}\Lambda_{k}G\}_{k\ge 1}$ is such that $G^{*}(e^{j\theta})\Lambda_{k}G(e^{j\theta})>0$ for all $\theta\in[-\pi,\pi]$ and $k$. Since the eigenvalues of a continuous matrix-valued function $F\colon [a,b]\to \Cbb^{n\times n}, \, \theta\mapsto F(\theta)$, depend continuously on $\theta$ \cite[Cor.~VI.1.6]{Bhatia}, we have that $G^{*}\Lambda_{k}G \ge \mu_{k} I$ where \[\mu_{k}:=\min_{\theta}\lambda_{\min}\left(G^{*}(e^{j\theta})\Lambda_{k}G(e^{j\theta})\right)>0\] 
	and $\lambda_{\min}(\cdot)$ denotes the smallest eigenvalue. Further, since the sequence $\{\Lambda_{k}\}_{k\ge 1}$ converges to an element $\bar{\Lambda}\in\Lscr_+$, then $\{G^{*}\Lambda_{k}G\}_{k\ge 1}$ converges uniformly to the function $G^{*}(e^{j\theta})\bar{\Lambda}G(e^{j\theta})$ which is positive definite for all $\theta\in[-\pi,\pi]$. Hence, there exists $\mu>0$ such that $\mu_{k}\ge \mu$ for all $k$. On the other hand, since $\delta\Lambda$ is fixed, it must hold that $G^*\delta\Lambda G\leq MI$, where
	\[M:=\max_\theta\rho\left(G^*(e^{j\theta})\delta\Lambda G(e^{j\theta})\right).\]
	Here $\rho(\cdot)$ denotes the spectral radius of a matrix. Therefore, we have
	\[\delta\tilde{F}_{\Lambda_k,\delta\Lambda}\leq M\mu^{-2}GG^*,\ \forall k.\]
    Moreover,
	\[
	\left|[\delta\tilde{F}_{\Lambda_k,\delta\Lambda}]_{i\ell}\right|\le M\mu^{-2} G_{\max}, \quad \forall\, k\ge 1,\ \forall\, i,\ell,
	\]
	where $G_{\max}:=\max_{\theta,i,\ell} \left|[GG^{*}]_{i\ell}\right|<\infty$ since the entries of $G(e^{j\theta})G^{*}(e^{j\theta})$ are continuous functions of $\theta$, analytic in an open annulus containing the unit circle. Hence, by Lebesgue's dominated convergence theorem, we have
    \begin{align*}
	\lim_{k\to\infty}\delta\tilde{\omega}_{\Lambda_k}(\delta\Lambda) = -\int \lim_{k\to\infty} \delta\tilde{F}_{\Lambda_k,\delta\Lambda}=  \delta\tilde{\omega}_{\bar{\Lambda}}(\delta\Lambda).
    \end{align*}
	Partial derivatives can then be recovered by the operation $\langle\delta\Lambda_1,\delta\tilde{\omega}_\Lambda(\delta\Lambda_2)\rangle$  by choosing $\delta\Lambda_k,\ k=1,2$ to be the standard basis matrices of $\Hfrak_n$, where the notation $\langle M_1,M_2\rangle:=\trace(M_1M_2)$ denotes the standard inner product in $\Hfrak_n$. In this way, one can see that every partial derivative of $\tilde{\omega}$ is continuous in $\Lambda$.\end{proof}


\begin{lemma}\label{lem_homotopy}
	The map
	\begin{equation}\label{homotopy}
		\begin{split}
			H\colon \Lscr_+^\Gamma\times[0,1] &\to\range_+\Gamma \\
			(\Lambda,t) & \mapsto\int G\Phi_{\Lambda,t} G^*.
		\end{split}
	\end{equation}
	is a proper continuous homotopy between $\omega$ and $\tilde{\omega}$, where
	\begin{equation}\label{Phi_Gamma_t}
		\Phi_{\Lambda,t}:=W_\Lambda^{-1}[\,t\Psi+(1-t)I\,] W_\Lambda^{-*}.
	\end{equation}
\end{lemma}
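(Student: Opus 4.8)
The plan is to verify the three defining features of a proper continuous homotopy in turn --- the two endpoint identities, joint continuity, and properness --- with the last being the crux.

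The endpoints are immediate from (\ref{Phi_Gamma_t}). Setting $t=1$ gives $\Phi_{\Lambda,1}=W_\Lambda^{-1}\Psi W_\Lambda^{-*}=\Phi_\Lambda$, so $H(\Lambda,1)=\omega(\Lambda)$; setting $t=0$ and using $W_\Lambda^{-1}W_\Lambda^{-*}=(W_\Lambda^*W_\Lambda)^{-1}=(G^*\Lambda G)^{-1}$ from (\ref{W_Lambda}) gives $\Phi_{\Lambda,0}=(G^*\Lambda G)^{-1}$, so $H(\Lambda,0)=\tilde\omega(\Lambda)$. Joint continuity is then argued just as in Lemma \ref{lem_smooth_omega_tilde}: the integrand $GW_\Lambda^{-1}[\,t\Psi+(1-t)I\,]W_\Lambda^{-*}G^*$ is affine in $t$ and depends on $\Lambda$ through the canonical factor $W_\Lambda$, whose continuous (indeed real-analytic) dependence on $\Lambda\in\Lscr_+$ is standard for coercive rational spectra. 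On any compact subset of $\Lscr_+^\Gamma\times[0,1]$ the matrix $G^*\Lambda G$ stays uniformly coercive on $\Tbb$, so the integrand and its dependence on $(\Lambda,t)$ are uniformly bounded and continuous, and continuity under the integral sign follows.

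The main obstacle is properness, and the observation that makes it tractable is that $\Psi\in\Sscr_m$ is bounded and coercive while $I$ is constant, so the convex combination satisfies $aI\le t\Psi+(1-t)I\le bI$ on $\Tbb$ for some constants $0<a\le b<\infty$ that are \emph{uniform in} $t\in[0,1]$. Conjugating by $W_\Lambda^{-1}$ and again using $W_\Lambda^{-1}W_\Lambda^{-*}=(G^*\Lambda G)^{-1}$ yields the pointwise sandwich $a(G^*\Lambda G)^{-1}\le\Phi_{\Lambda,t}\le b(G^*\Lambda G)^{-1}$; multiplying on the left by $G$, on the right by $G^*$, and integrating (each of which preserves the positive-semidefinite ordering) gives
\[
	a\,\tilde\omega(\Lambda)\le H(\Lambda,t)\le b\,\tilde\omega(\Lambda),\qquad (\Lambda,t)\in\Lscr_+^\Gamma\times[0,1].
\]
Thus $H(\Lambda,t)$ is squeezed between fixed scalar multiples of $\tilde\omega(\Lambda)$, uniformly in $t$.

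With this sandwich, properness of $H$ reduces to properness of $\tilde\omega$. Indeed, a compact $K\subset\range_+\Gamma$ satisfies $\epsilon I\le S\le MI$ for all $S\in K$ and some $0<\epsilon\le M$; if $(\Lambda,t)\in H^{-1}(K)$, the sandwich forces $(\epsilon/b)I\le\tilde\omega(\Lambda)\le(M/a)I$, so the $\Lambda$-components of $H^{-1}(K)$ all lie in $\tilde\omega^{-1}(K')$ for the fixed compact set $K'=\{S\in\range_+\Gamma:(\epsilon/b)I\le S\le(M/a)I\}$. If $\tilde\omega$ is proper, $\tilde\omega^{-1}(K')$ is compact, whence $H^{-1}(K)\subset\tilde\omega^{-1}(K')\times[0,1]$ is a closed (by continuity of $H$) subset of a compact set, hence compact. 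It therefore remains to establish that $\tilde\omega$ --- the identity-prior instance, $H(\cdot,0)$ --- is proper, which is exactly the boundary analysis underlying the results of \cite{FPZ-10}. The governing mechanism is that as $\Lambda$ approaches $\partial\Lscr_+$, where $G^*\Lambda G$ loses coercivity at some point of $\Tbb$, the integrand $G(G^*\Lambda G)^{-1}G^*$ develops non-integrable growth and $\tilde\omega(\Lambda)$ leaves every compact subset of $\range_+\Gamma$; meanwhile the homogeneity $\tilde\omega(\lambda\Lambda)=\lambda^{-1}\tilde\omega(\Lambda)$ and the normalization $\trace\!\big(\Lambda\,\tilde\omega(\Lambda)\big)=m$ preclude radial escape of $\Lambda$ to infinity inside $\Lscr_+$. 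I expect the genuinely delicate case to be an unbounded sequence $\Lambda_n$ whose normalized directions accumulate on $\partial\Lscr_+$, which must be ruled out by a careful estimate exploiting the rational structure of $G$ inherited from the reachable pair $(A,B)$.
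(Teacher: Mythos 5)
Your core strategy for properness is correct and is genuinely different from (and more modular than) the paper's. The paper proves properness of $H$ directly: it shows $H^{-1}(K)$ is bounded (if $\|\Lambda_k\|\to\infty$ then $H(\Lambda_k,t_k)$ tends to be singular) and closed (if $\Lambda_k\to\partial\Lscr_+$ then $\|H(\Lambda_k,t_k)\|\to\infty$, via the lower bound $H(\Lambda_k,t_k)\ge K_{\min}\int G(G^*\Lambda_k G)^{-1}G^*$), deferring the two blow-up/degeneration claims to the proof of Theorem 11.4.1 in \cite{FPZ-10}. Your two-sided sandwich $a\,\tilde\omega(\Lambda)\le H(\Lambda,t)\le b\,\tilde\omega(\Lambda)$ (which is exactly the paper's pair of bounds $K_{\min},K_{\max}$, used there separately for closedness and for dominated convergence) packages both directions at once and reduces properness of the whole homotopy to properness of the single endpoint map $\tilde\omega$; the set-theoretic argument $H^{-1}(K)\subset\tilde\omega^{-1}(K')\times[0,1]$ with $K'$ compact is correct. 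This is a cleaner interface: properness of $\tilde\omega$ is precisely Theorem 11.4.1 of \cite{FPZ-10} specialized to $\Psi=I$ (the very citation the paper makes for $\omega$ in its Theorem \ref{thm_exist}), and it also follows from Theorem 11.4.2 of \cite{FPZ-10} (bijectivity of $\tilde\omega$) combined with invariance of domain, since a continuous bijection between open subsets of a Euclidean space is a homeomorphism, hence proper.

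Two soft spots. First, having made the reduction, you should simply close it with that citation; instead you treat properness of $\tilde\omega$ as still to be established, offer a heuristic (degree $-1$ homogeneity and $\trace(\Lambda\,\tilde\omega(\Lambda))=m$, both of which are true identities), and explicitly concede that the mixed case --- $\|\Lambda_k\|\to\infty$ with normalized directions accumulating on $\partial\Lscr_+$ --- is not handled. As written, that last step is a gap; it is only acceptable because the missing fact is a quotable result of \cite{FPZ-10}, which is the standard the paper itself adopts. Second, joint continuity is where the paper invests most of its effort, and you dismiss it as ``standard'': the paper proves continuity of $\Lambda\mapsto W_\Lambda$ via continuity of the stabilizing DARE solution \cite[Th.~2.4]{SS96} and of the Cholesky factor, and then justifies passing the limit inside the integral by dominated convergence, using a uniform lower bound on the eigenvalues of $G^*\Lambda_k G$ along a convergent sequence. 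Your compactness/uniform-coercivity sketch contains the right idea, but a complete write-up needs either that Riccati-based argument or an equivalent justification that the minimum-phase spectral factor depends continuously on $\Lambda$, plus an explicit dominating function.
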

\begin{proof}
	By definition we need to show two things, namely that $H$ is jointly continuous in $\Lambda$ and $t$ and that $H$ is proper. 
	In order to prove joint continuity, we first notice that the spectral factor $W_{\Lambda}(z)$ can be written as \cite[Lemma 11.4.1]{FPZ-10}
	\begin{equation}\label{W_spec_factor}
	    W_{\Lambda} (z) := L_{\Lambda}^{-*}B^{*}P_{\Lambda}A(zI -A)^{-1}B + L_{\Lambda},
	\end{equation}
	where $P_{\Lambda}$ is the unique stabilizing solution of the following Discrete-time Algebraic Riccati Equation (DARE)
	\[
	\Pi = A^{*}\Pi A- A^{*}\Pi B(B^{*}\Pi B)^{-1}B^{*}\Pi A+\Lambda,
	\]
	and $L_{\Lambda}$ is the right Cholesky factor of $B^{*}P_{\Lambda}B$, 
	{i.e.,
	\[B^*P_{\Lambda}B=L_{\Lambda}^*L_{\Lambda}\]
	with $L_\Lambda$ being lower triangular having real and positive diagonal entries. Next, let us introduce a change of variables by letting
	\[C_\Lambda:=L_{\Lambda}^{-*}B^{*}P_{\Lambda}.\]
	Then, it is not difficult to recover the relation $L_\Lambda=C_\Lambda B$. In this way, the spectral factor (\ref{W_spec_factor}) can be rewritten as
	\[W_{\Lambda} (z) = C_{\Lambda}A(zI -A)^{-1}B + C_{\Lambda}B.\]
	According to \cite[Thm. A.5.5]{avventi2011spectral}, the dependence of the $m\times n$ matrix $C_\Lambda$ defined above on $\Lambda\in\Lscr_+^\Gamma$ turns out to be a homeomorphism. 
	From this fact it follows that $W_{\Lambda}(e^{j\theta})$ depends continuously on $\Lambda\in\Lscr_+^\Gamma$, for all $\theta\in[-\pi,\pi]$.}
	Consider now
	\[
	\Phi_{\Lambda,t}(e^{j\theta})=W_\Lambda^{-1}(e^{j\theta})[\,t\Psi(e^{j\theta})+(1-t)I\,] W_\Lambda^{-*}(e^{j\theta}).
	\]
	As a linear combination in $t\in[0,1]$ of continuous functions of $\Lambda$, $\Phi_{\Lambda,t}(e^{j\theta})$ is jointly continuous w.r.t. $t\in[0,1]$ and $\Lambda\in\Lscr_+^\Gamma$, for all $\theta\in[-\pi,\pi]$.
	
	Next we need to show the continuity together with the integral. Consider any sequence $\{(\Lambda_{k},t_{k})\}_{k\ge 1}\subset\Lscr_+^\Gamma\times [0,1]$ such that $\lim_{k\to\infty}t_{k}=\bar{t}\in[0,1]$ and $\lim_{k\to\infty}\Lambda_{k}=\bar{\Lambda}\in\Lscr_+^\Gamma$. {Following the same line of reasoning as in the proof of Lemma \ref{lem_smooth_omega_tilde}, there exists $\mu>0$ such that $G^*\Lambda_k G\geq\mu I,\ \forall k$. Therefore, it holds that
	\begin{align*}
		G\Phi_{\Lambda_{k},t_{k}}G^{*}&\le K G(G^*\Lambda_{k} G)^{-1}G^{*}\\
		&\le K \mu^{-1} GG^{*}, \quad \forall\, k\ge 1,
	\end{align*}
	where $K$ is a positive real number such that \[t\Psi(e^{j\theta})+(1-t)I\leq KI,\ \forall\, t\in[0,1],\ \theta\in[-\pi,\pi].\] 
	Such $K$ exists since $\Psi$ is bounded. The rest argument is also similar. Given the joint continuity of $\Phi_{\Lambda,t}$ in $\Lambda$ and $t$, one can show that the following limit holds}
	\begin{align*}
		\lim_{k\to\infty}\int G\Phi_{\Lambda_{k},t_{k}} G^* = \int \lim_{k\to\infty} G\Phi_{\Lambda_{k},t_{k}} G^*=  \int G\Phi_{\bar{\Lambda},\bar{t}} G^*.
	\end{align*}
	This proves joint continuity of $H$ in $t$ and $\Lambda$.

	Once we have joint continuity, the properness is not difficult to prove. In fact, let $K\subset\range_+\Gamma$ be a compact subset, and we next show that the set
	\[H^{-1}(K):=\{\,(\Lambda,t)\in\Lscr_+^\Gamma\times[0,1]\,:\,H(\Lambda,t)\in K\,\}\]
	is compact. The argument is essentially the same as the proof of Theorem 11.4.1 of \cite{FPZ-10}. Since our setting is finite-dimensional, a set being compact is equivalent to being closed and bounded. If $H^{-1}(K)$ is unbounded, one can then find a sequence $\{(\Lambda_k,t_k)\}\subset H^{-1}(K)$ such that $\|(\Lambda_k,t_k)\|\to\infty$ as $k\to\infty$, which necessarily implies $\|\Lambda_k\|\to\infty$. However, in this case $H(\Lambda_k,t_k)$ would tend to be singular, which contradicts the premise of $K$ being compact. This proves the boundedness.
	
	To prove the closedness, if a sequence $\{(\Lambda_k,t_k)\}$ in $H^{-1}(K)$ converges to $(\Lambda,t)$, then $\Lambda$ cannot be on the boundary of $\Lscr_+$, otherwise $\|H(\Lambda_k,t_k)\|\to\infty$, which again contradicts the compactness of $K$. 
	To see the latter fact, notice that
	\begin{align*}
		H(\Lambda_k,t_k) &=\int G\Phi_{\Lambda_{k},t}G^{*} \\
	 &= \int GW_{\Lambda_{k}}^{-1}[\,t\Psi+(1-t)I\,] W_{\Lambda_{k}}^{-*}G^{*} \\ 
	 &\ge K_{\min}\int G(G^*\Lambda_{k} G)^{-1} G^*,
	\end{align*}
	where $K_{\min}:=\min_{t,\theta}\lambda_{\min}\left(t\Psi(e^{j\theta})+(1-t)I\right)> 0$ since $\Psi$ is coercive. Now if $\{\Lambda_{k}\}$ approaches $\partial\Lscr_+$, then $G^*(e^{j\theta})\Lambda_{k} G(e^{j\theta})$ tends to be singular for some $\theta$. Since $G$ has rank $m$ on $\Tbb$, this in turn implies that $\|H(\Lambda_k,t_k)\|\to\infty$ as $k\to\infty$.
	Therefore, by the joint continuity of $H$, $(\Lambda,t)\in H^{-1}(K)$. This concludes the proof of properness.
\end{proof}

\begin{thm}\label{thm_exist}
	The map $\omega$ is surjective.
\end{thm}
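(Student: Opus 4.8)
The plan is to prove surjectivity of $\omega$ through the degree theory, transporting the computation to the tractable map $\tilde\omega$ via the homotopy $H$ of Lemma \ref{lem_homotopy}. Since $H$ is a proper continuous homotopy with $H_0=\tilde\omega$ and $H_1=\omega$, the homotopy invariance of the degree (valid in the continuous category) yields $\deg(\omega)=\deg(\tilde\omega)$; and by the reviewed property that a nonzero degree forces surjectivity, it then suffices to show $\deg(\tilde\omega)\neq 0$. The whole argument thus reduces to computing the degree of the identity-prior map $\tilde\omega(\Lambda)=\int G(G^*\Lambda G)^{-1}G^*$, which by Lemma \ref{lem_smooth_omega_tilde} is smooth and, being the slice $H(\cdot,0)$ of the proper $H$ (so $\tilde\omega^{-1}(K)=H^{-1}(K)\cap(\Lscr_+^\Gamma\times\{0\})$ is closed in a compact set), is proper; hence its degree is well defined.

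To evaluate $\deg(\tilde\omega)$ I would analyze its Jacobian. Writing $R_\Lambda:=G^*\Lambda G>0$ on $\Tbb$ and differentiating in a direction $V\in\range\Gamma$ gives
\[
D\tilde\omega(\Lambda)[V] = -\int G R_\Lambda^{-1}(G^* V G) R_\Lambda^{-1} G^*.
\]
Pairing with $V$ in the trace inner product and using cyclicity, with $S:=G^*VG$ Hermitian, one obtains
\[
\langle V, D\tilde\omega(\Lambda)[V]\rangle = -\int \trace\!\big[(R_\Lambda^{-1/2} S R_\Lambda^{-1/2})^2\big] \le 0,
\]
with equality iff $G^*VG\equiv 0$ on $\Tbb$. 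But for $V\in\range\Gamma$ the identity $\langle \Gamma\Phi, V\rangle=\int \trace[(G^*VG)\Phi]$ shows that $G^*VG\equiv 0$ forces $V\perp\range\Gamma$, whence $V=0$. Thus $D\tilde\omega(\Lambda)$ is a self-adjoint, negative definite operator on $\range\Gamma$ for every $\Lambda$, so $\det D\tilde\omega(\Lambda)\neq 0$ with constant sign $(-1)^{\dim\range\Gamma}$.

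With the Jacobian everywhere nonsingular of constant sign, the degree is read off at any regular value $y$ as a sum of equal signs over the finite set $\tilde\omega^{-1}(y)$. Invoking the surjectivity of $\tilde\omega$ established in \cite[Thm.~11.4.3]{FPZ-10} for the constant prior $\Psi=I$ (so that $\tilde\omega^{-1}(y)\neq\varnothing$), this sum is a nonzero integer; in fact the negative definiteness of the Jacobian makes $\tilde\omega$ strictly monotone, hence injective, on the convex set $\Lscr_+^\Gamma$, pinning the degree to $\pm 1$. Consequently $\deg(\omega)=\deg(\tilde\omega)\neq 0$, and $\omega$ is surjective. I expect the main obstacle to be the Jacobian step: correctly identifying $D\tilde\omega$ as a self-adjoint operator on the subspace $\range\Gamma$ and upgrading the manifest negative semidefiniteness to strict definiteness, which hinges on the injectivity of $V\mapsto G^*VG$ on $\range\Gamma$ — a point where the interplay between $\range\Gamma$ and its orthogonal complement must be handled with care.
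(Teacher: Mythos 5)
Your proof is correct, and while it shares the paper's overall skeleton (which is essentially forced by Lemmas \ref{lem_smooth_omega_tilde} and \ref{lem_homotopy}: reduce $\deg(\omega)$ to $\deg(\tilde\omega)$ via the proper homotopy $H$, then show the latter is nonzero), the way you establish $\deg(\tilde\omega)\neq 0$ is genuinely different. The paper simply cites Theorem 11.4.2 of \cite{FPZ-10} for \emph{bijectivity} of $\tilde\omega$, invokes Sard--Brown to obtain a regular value in $\range_+\Gamma$, and reads off a degree of $\pm 1$ from the singleton fiber. You instead compute the Jacobian explicitly, $D\tilde\omega(\Lambda)[V]=-\int G R_\Lambda^{-1}(G^*VG)R_\Lambda^{-1}G^*$, verify it is self-adjoint and negative definite on $\range\Gamma$ (the strictness correctly hinging on the fact that $\ker(V\mapsto G^*VG)=(\range\Gamma)^\perp$, so $G^*VG\equiv 0$ with $V\in\range\Gamma$ forces $V=0$), so that \emph{every} value is regular with all fiber contributions of the same sign $(-1)^{\dim_{\Rbb}\range\Gamma}$; this makes Sard--Brown unnecessary and reduces the reliance on \cite{FPZ-10} from bijectivity to mere nonemptiness of fibers (surjectivity, your citation of Thm.~11.4.3, which indeed covers $\Psi=I$). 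Your monotonicity argument on the convex set $\Lscr_+^\Gamma$ even re-derives the injectivity half of the cited result, pinning $\deg(\tilde\omega)=(-1)^{m(2n-m)}$. The trade-off: the paper's route is shorter but leans entirely on the prior work and on Sard--Brown; yours is more self-contained, gives the exact value of the degree, and incidentally supplies the Jacobian formula that the paper's concluding section poses as an open computational question. Two small points of care: your properness claim for $\tilde\omega$ (slice of a proper homotopy) is fine but strictly speaking identifies $\tilde\omega^{-1}(K)\times\{0\}$ with a closed subset of the compact $H^{-1}(K)$; and well-definedness of $\deg(\omega)$ itself follows from the properness of $H$ via the homotopy-invariance property as stated, so you need not (and do not) separately cite Theorem 11.4.1 of \cite{FPZ-10} as the paper does.
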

\begin{proof}
	Given the second listed property of the degree, the claim follows directly if we can show that
	\[\deg(\omega)\neq0.\]
	We notice first that $\omega$ is proper by Theorem 11.4.1 from \cite{FPZ-10}, and thus the degree is well defined.
	By Lemma \ref{lem_homotopy} and the homotopy invariance of the degree,
	\[\deg(\omega)=\deg(\tilde{\omega}).\]
	As a consequence of Sard--Brown theorem \cite[p.~63]{OR_degree}, the codomain $\range_+\Gamma$ must contain a regular value of $\tilde{\omega}$ since it has positive $\range\,\Gamma$-Lebesgue measure. {By Lemma \ref{lem_smooth_omega_tilde}, the $C^1$ degree (\ref{def_degree}) of $\tilde{\omega}$ at a regular value is well-defined.} Meanwhile, from Theorem 11.4.2 of \cite{FPZ-10}, we know that $\tilde{\omega}$ is bijective. Therefore, we must have
	\[\deg(\tilde{\omega})\neq0,\]
	and this concludes the proof.	
\end{proof}

\subsection{The special case of covariance extension}\label{SubsecCovExt}

Given $\Lambda\in\Lscr_+$ and $G(z)$ in (\ref{trans_func_covext}), $G^*\Lambda G$ is now a matrix Laurent polynomial that takes positive definite values on the unit circle. Let us take
\begin{equation}
	Q(z):=\sum_{k=-p}^{p} Q_kz^{k}\equiv G^*\Lambda G,\quad Q_{-k}=Q_k^*\in\Cbb^{m\times m}.
\end{equation}
{Then according e.g. to \cite{BF16},} 
$Q(z)$ admits a spectral factorization
\begin{equation}
	Q(z)=D^{*}(z)D(z),
\end{equation}
where $D(z)=\sum_{k=0}^{p}D_kz^{-k}$ is a $m\times m$ matrix polynomial (with negative powers) and the scalar polynomial $\det D(z)$ has all its roots strictly inside the unit circle. We shall call such $D(z)$ Schur.\footnote{Moreover, one can make such spectral factor unique if the constant matrix coefficient $D_0$ is required to be lower triangular with real and positive diagonal elements.} Therefore, the outer spectral factor in (\ref{W_Lambda}) is just
\begin{equation}
	W_\Lambda(z)\equiv D(z).
\end{equation}
We have the following corollary of Theorem \ref{thm_exist}.
\begin{cor}
	Given a finite $m\times m$ matrix covariance sequence $\Sigma_0,\Sigma_1,\dots,\Sigma_p$, for any $\Psi\in\Sscr_m$, there exists a Schur polynomial $D(z)$ of degree $p$ such that the spectral density
	\begin{equation}\label{Phi_CovExt}
		\Phi:=D^{-1}\Psi D^{-*}
	\end{equation}
	satisfies the moment equations $(\ref{mmt_eqns})$. The polynomial $D(z)$ is a right Schur spectral factor of $G^*\Lambda G$ for some $\Lambda\in\Lscr_+^\Gamma$.
\end{cor}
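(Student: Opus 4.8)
The plan is to recognize this corollary as a direct specialization of Theorem \ref{thm_exist} to the particular filter bank $G(z)$ given in (\ref{trans_func_covext}), for which the moment constraint (\ref{mmt_constraint}) is equivalent to the covariance extension equations (\ref{mmt_eqns}). First I would observe that, as already spelled out in Subsection \ref{SubsecCovExt}, with this choice of $(A,B)$ the quantity $G^*\Lambda G$ is a Hermitian matrix Laurent polynomial $Q(z)=\sum_{k=-p}^{p}Q_k z^k$, and for $\Lambda\in\Lscr_+$ it takes positive definite values on $\Tbb$; hence its (right) spectral factor is precisely the matrix polynomial $D(z)$, so that $W_\Lambda\equiv D$ and $\Phi_\Lambda=W_\Lambda^{-1}\Psi W_\Lambda^{-*}=D^{-1}\Psi D^{-*}$, matching (\ref{Phi_CovExt}).

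Next I would verify the feasibility hypothesis needed to invoke the theorem. The covariance extension result quoted earlier guarantees that a solution to (\ref{mmt_eqns}) exists whenever the block-Toeplitz matrix $\Sigma$ built from $C_0,\dots,C_n$ is positive semidefinite, and that the solution set is nonempty; one should assume (as is standard and consistent with the standing feasibility assumption $\Sigma\in\range_+\Gamma$) that the given covariance sequence is such that $\Sigma\in\Hfrak_{+,n}$. Under this assumption $\Sigma\in\range_+\Gamma$, since for the covariance-extension filter bank $\range\,\Gamma$ is exactly the space of block-Toeplitz Hermitian matrices of the relevant size, so the hypotheses of Problem \ref{spec_estimation} are met.

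With feasibility in place, Theorem \ref{thm_exist} asserts that the map $\omega\colon\Lscr_+^\Gamma\to\range_+\Gamma$ is surjective, so there exists $\Lambda\in\Lscr_+^\Gamma$ with $\int G\Phi_\Lambda G^*=\Sigma$. Translating this matrix equation into the entrywise block form via the explicit $G(z)$ in (\ref{trans_func_covext}) yields exactly the moment equations (\ref{mmt_eqns}). Taking $D(z)=W_\Lambda(z)$, the right Schur spectral factor of $G^*\Lambda G$, and setting $\Phi=D^{-1}\Psi D^{-*}$ then completes the argument.

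The only genuinely non-routine step is confirming that, for the covariance-extension $G(z)$, the constraint $\int G\Phi G^*=\Sigma$ really is equivalent to the scalar moment equations (\ref{mmt_eqns}) and that $\Sigma\in\range_+\Gamma$ holds; both facts are already established in Section \ref{SecProb} (the equivalence is displayed around (\ref{mmt_eqns}), and the structure of $\range\,\Gamma$ is cited from \cite{FPZ-12}), so the corollary is essentially a restatement of Theorem \ref{thm_exist} in the covariance-extension language, and no new analysis is required beyond the identification $W_\Lambda\equiv D$.
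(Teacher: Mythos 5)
Your proposal is correct and follows essentially the same route as the paper: the paper states this corollary as an immediate consequence of Theorem \ref{thm_exist}, using exactly the identification $W_\Lambda\equiv D$ via the polynomial spectral factorization $G^*\Lambda G=D^*D$ set up in Subsection \ref{SubsecCovExt}, together with the equivalence of (\ref{mmt_constraint}) and (\ref{mmt_eqns}) established in Section \ref{SecProb}. Your additional care in checking the feasibility hypothesis $\Sigma\in\range_+\Gamma$ (via the block-Toeplitz structure of $\range\,\Gamma$ and positive definiteness of $\Sigma$) is a point the paper leaves implicit, and it is handled correctly.
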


In particular, when taking $\Psi(z)=N(z)N^*(z)$ with $N(z)=\sum_{k=0}^{p}N_k\,z^{-k},\ N_k\in\Cbb^{m\times m}$, which is the spectral density of a moving-average process, the spectral density $\Phi$ in (\ref{Phi_CovExt}) would correspond to an $m$-dimensional vector ARMA process
\begin{equation}
\sum_{k=0}^{p}D_k\,y(t-k)=\sum_{k=0}^{p}N_k\,w(t-k),\quad t\in\Zbb,
\end{equation}
and we recover one of the main results of \cite[Section V]{Gthesis} under a more general setting.

\section{Concluding remarks}
We have shown that the multivariate spectral estimation problem posed in \cite{FPZ-10} admits a solution under an arbitrary prior matrix density. An immediate question is uniqueness of the solution and, more strongly, well-posedness of the problem. Following previous work on rational covariance extension \cite{BLGM-95}, we intend to pursue the uniqueness problem in the frame of the global inverse function theorem now attributed to Hadamard. The next theorem appears in \cite{Gordon_diffeo-72}; see also \cite[p.~127]{KP_imp_fcn_thm}. 

\begin{thm}[Hadamard]\label{Hadamard}
Let $M_1$ and $M_2$ be connected, oriented, boundary-less $n$-dimensional manifolds of class $C^1$, and suppose that $M_2$ is simply connected. Then a $C^1$ map $f\colon M_1 \to M_2$ is a diffeomorphism if and only if $f$ is proper and the Jacobian determinant of $f$ never vanishes.
\end{thm}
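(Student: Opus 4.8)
The plan is to prove the two implications separately; essentially all the content lies in the \emph{if} direction. The \emph{only if} direction is immediate: if $f$ is a diffeomorphism with inverse $g\colon M_2\to M_1$, then for every compact $K\subset M_2$ one has $f^{-1}(K)=g(K)$, which is compact, so $f$ is proper; and differentiating the identity $g\circ f=\mathrm{id}$ via the chain rule shows that $f'(x)$ is invertible at every point $x$, so the Jacobian determinant never vanishes.

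For the converse, assume $f$ is proper and its Jacobian determinant never vanishes. First I would invoke the inverse function theorem pointwise: since $\det f'(x)\neq 0$ for all $x$, the map $f$ is a local $C^1$ diffeomorphism, and in particular an open map. Next I would establish surjectivity. The image $f(M_1)$ is open because $f$ is open, and it is also closed because a proper map into a locally compact Hausdorff space is closed. As $M_2$ is connected and $f(M_1)$ is nonempty, it follows that $f(M_1)=M_2$.

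The heart of the argument is to upgrade $f$ to a \emph{covering map}. Fix $y\in M_2$. Its fiber $f^{-1}(y)$ is compact by properness and discrete because $f$ is a local homeomorphism, hence finite, say $f^{-1}(y)=\{x_1,\dots,x_k\}$. I would choose pairwise disjoint open neighborhoods $U_i\ni x_i$ on each of which $f$ restricts to a homeomorphism onto an open set, and then use properness to produce a connected open neighborhood $V\ni y$ with $f^{-1}(V)\subseteq\bigcup_i U_i$: were this impossible, one could extract a sequence in $f^{-1}(\overline{V_0})\setminus\bigcup_i U_i$, for a fixed compact neighborhood $\overline{V_0}$ of $y$, whose images converge to $y$; properness then forces a subsequential limit lying in the fiber $f^{-1}(y)$ yet outside $\bigcup_i U_i$, a contradiction. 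After shrinking each $U_i$ to $U_i\cap f^{-1}(V)$, the restrictions onto $V$ are homeomorphisms, so $V$ is evenly covered and $f$ is a covering map. Finally I would invoke simple connectivity: a connected covering space of a simply connected base is trivial, so $k=1$ and $f$ is bijective. A bijective local $C^1$ diffeomorphism is a global $C^1$ diffeomorphism, since its inverse coincides locally with the $C^1$ local inverses.

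I expect the main obstacle to be the even-covering step, i.e.\ extracting the evenly covered neighborhood $V$ from properness; this is exactly where properness is used in full force and where the compactness bookkeeping must be handled carefully. It is worth noting a shortcut consonant with the degree theory reviewed above: since $M_1$ and $M_2$ are oriented and connected and $\det f'$ never vanishes, the sign of the Jacobian is constant on $M_1$, so $\deg(f,y)=\pm\,|f^{-1}(y)|$ for every value $y$ (all of which are regular). This immediately yields both surjectivity, via $\deg(f)\neq 0$, and the constancy of the fiber cardinality $k$. Simple connectivity is then the essential ingredient forcing $k=1$; the example $z\mapsto z^k$ on $\Tbb$ shows that it cannot be dispensed with.
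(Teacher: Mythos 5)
You should first be aware that the paper does not prove this theorem at all: it is imported verbatim from the cited literature (Gordon's paper, and Krantz--Parks for a weaker version) as a tool for the authors' \emph{future} work on uniqueness, so there is no internal proof to compare against. Judged on its own merits, your argument is the classical proof of the Hadamard(--Caccioppoli) global inverse function theorem, and it is essentially the one found in those references: a proper local homeomorphism is a covering map, and a connected covering of a simply connected, locally path-connected base is one-sheeted. The ``only if'' direction, the surjectivity step (open image by the inverse function theorem, closed image because proper maps into locally compact Hausdorff spaces are closed, then connectedness of $M_2$), the finiteness of fibers, and the sequence/properness argument producing $V$ with $f^{-1}(V)\subseteq\bigcup_i U_i$ are all sound. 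It is also worth noting that your covering-space route never uses orientability --- that hypothesis only enters the degree-theoretic shortcut you sketch at the end --- so your argument in fact proves a slightly more general statement than the one quoted.

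There is one step that is asserted rather than proved, and as written it does not quite follow: after shrinking each $U_i$ to $U_i\cap f^{-1}(V)$ you claim that ``the restrictions onto $V$ are homeomorphisms.'' Injectivity, continuity, and openness only give homeomorphisms onto \emph{open subsets} of $V$; surjectivity onto $V$ is not automatic from $f^{-1}(V)\subseteq\bigcup_i U_i$. Two standard repairs: (a) additionally require $V\subseteq\bigcap_{i=1}^{k}f(U_i)$, which is legitimate since this is a finite intersection of open neighborhoods of $y$; then every $v\in V$ has a preimage in each $U_i$, and that preimage automatically lies in $U_i\cap f^{-1}(V)$. Or (b) note that the restriction of $f$ to each $U_i\cap f^{-1}(V)$ is still proper --- for compact $K\subseteq V$, the set $f^{-1}(K)\cap U_i$ is clopen in the compact set $f^{-1}(K)$ because the $U_j$ are pairwise disjoint open sets covering $f^{-1}(K)$, hence compact --- so each image $f\bigl(U_i\cap f^{-1}(V)\bigr)$ is open, closed, and nonempty in the connected set $V$, hence equals $V$. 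With either one-line fix the even-covering step is complete, and the remainder of your proof (trivial covering over a simply connected base, and a bijective local $C^1$ diffeomorphism being a global one) goes through as stated.
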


The next proposition is simple.
\begin{prop}
	The set $\range_+\Gamma$ is simply connected.
\end{prop}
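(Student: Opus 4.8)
The plan is to exploit the fact that $\range_+\Gamma=\range\,\Gamma\cap\Hfrak_{+,n}$ is the intersection of the real linear subspace $\range\,\Gamma\subset\Hfrak_n$ with the cone $\Hfrak_{+,n}$ of positive definite Hermitian matrices. I would argue that this intersection is convex and nonempty, and then invoke the standard topological fact that a nonempty convex subset of a finite-dimensional real vector space is contractible, hence \emph{a fortiori} simply connected. This reduces the whole statement to two elementary convexity checks.

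First I would recall, as already noted in Section~\ref{SecProb}, that $\range\,\Gamma$ is a real linear subspace of $\Hfrak_n$, so it is in particular convex. Next I would verify that $\Hfrak_{+,n}$ is convex: for any $P,Q\in\Hfrak_{+,n}$ and any $t\in[0,1]$, every nonzero vector $v$ satisfies $v^{*}[\,tP+(1-t)Q\,]v=t\,v^{*}Pv+(1-t)\,v^{*}Qv>0$, so that $tP+(1-t)Q\in\Hfrak_{+,n}$. Since the intersection of two convex sets is convex, it follows that $\range_+\Gamma$ is convex.

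It remains to observe that $\range_+\Gamma$ is nonempty, which is guaranteed by the standing hypothesis $\Sigma\in\range_+\Gamma$ of Problem~\ref{spec_estimation}. Fixing any point $\Sigma_0\in\range_+\Gamma$, the straight-line homotopy $H(X,s)=(1-s)X+s\Sigma_0$ sends $\range_+\Gamma\times[0,1]$ into $\range_+\Gamma$ by convexity, is jointly continuous, and deforms the identity map into the constant map at $\Sigma_0$; hence $\range_+\Gamma$ is contractible. A contractible space has trivial fundamental group, so $\range_+\Gamma$ is simply connected. I do not expect any genuine obstacle here, consistent with the author's remark that the proposition is simple: the only points requiring care are the elementary verification that the positive-definite cone is convex and the appeal to nonemptiness through the feasibility assumption, after which contractibility (and thus simple connectedness) is immediate.
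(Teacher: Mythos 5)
Your proposal is correct and follows essentially the same route as the paper: the paper's proof contracts an arbitrary loop via the straight-line homotopy $F(t,u)=(1-u)f(t)+u\Sigma$, whose validity rests on exactly the convexity of $\range_+\Gamma$ that you spell out explicitly. Your version merely makes the convexity argument explicit and upgrades the conclusion to contractibility of the whole set, which subsumes the paper's loop-by-loop verification.
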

\begin{proof}
	By definition \cite[p.~127]{KP_imp_fcn_thm}, we need to show that: whenever $f\colon[0,1]\to\range_+\Gamma$ is a closed curve, i.e., $f$ is continuous with $f(0)=f(1)=\Sigma$, there exists a continuous function $F\colon [0,1]\times[0,1]\to\range_+\Gamma$ such that
	\begin{enumerate}[{(i)}]
		\item $F(t,0)=f(t)$, for all $t\in[0,1]$,
		\item $F(0,u)=F(1,u)=\Sigma$, for all $u\in[0,1]$, and
		\item $F(t,1)=\Sigma$, for all $t\in[0,1]$.
	\end{enumerate}
	One can easily verify that $F(t,u):=(1-u)f(t)+u\Sigma$ is the desired function.
\end{proof}

Given Theorem \ref{Hadamard} and the above proposition, the open question becomes: Is our map $\omega$ continuously differentiable? If so, how to compute its Jacobian?

{Another research direction concerns the computation of a solution to the problem. To accomplish this task, in \cite{FPZ-10} the following matricial fixed-point iteration was introduced
\begin{align}\label{eq:PFalgmult}
	\Lambda_{k+1}=\int \Lambda_{k}^{1/2} G(W_{\Lambda_{k}}^{-1}\Psi W_{\Lambda_{k}}^{-*})G^{*}\Lambda_{k}^{1/2},
\end{align}
where the initialization is set to $\Lambda_{0}=\frac{1}{n}I$. Iteration \eqref{eq:PFalgmult} can be seen as a multivariate generalization of the scalar algorithm proposed in \cite{PavonF-06} for the Kullback--Leibler estimation of spectral densities. The latter algorithm has proved to be extremely efficient and numerically robust, and its convergence properties have been thoroughly investigated in \cite{FPR-07,FRT-11,B17}. The extension of these convergence results to the multivariate case will be another subject of future investigation.
}

\bibliographystyle{IEEEtran}
\bibliography{Bibliography}

\begin{thebibliography}{10}
\providecommand{\url}[1]{#1}
\csname url@samestyle\endcsname
\providecommand{\newblock}{\relax}
\providecommand{\bibinfo}[2]{#2}
\providecommand{\BIBentrySTDinterwordspacing}{\spaceskip=0pt\relax}
\providecommand{\BIBentryALTinterwordstretchfactor}{4}
\providecommand{\BIBentryALTinterwordspacing}{\spaceskip=\fontdimen2\font plus
\BIBentryALTinterwordstretchfactor\fontdimen3\font minus
  \fontdimen4\font\relax}
\providecommand{\BIBforeignlanguage}[2]{{%
\expandafter\ifx\csname l@#1\endcsname\relax
\typeout{** WARNING: IEEEtran.bst: No hyphenation pattern has been}%
\typeout{** loaded for the language `#1'. Using the pattern for}%
\typeout{** the default language instead.}%
\else
\language=\csname l@#1\endcsname
\fi
#2}}
\providecommand{\BIBdecl}{\relax}
\BIBdecl

\bibitem{FPZ-10}
A.~Ferrante, M.~Pavon, and M.~Zorzi, ``Application of a global inverse function
  theorem of {B}yrnes and {L}indquist to a multivariable moment problem with
  complexity constraint,'' in \emph{Three Decades of Progress in Control
  Sciences}.\hskip 1em plus 0.5em minus 0.4em\relax Springer Berlin Heidelberg,
  2010, pp. 153--167.

\bibitem{BGL-THREE-00}
C.~I. Byrnes, T.~T. Georgiou, and A.~Lindquist, ``A new approach to spectral
  estimation: {A} tunable high-resolution spectral estimator,'' \emph{IEEE
  Transactions on Signal Processing}, vol.~48, no.~11, pp. 3189--3205, 2000.

\bibitem{Georgiou-L-03}
T.~T. Georgiou and A.~Lindquist, ``{K}ullback--{L}eibler approximation of
  spectral density functions,'' \emph{IEEE Transactions on Information Theory},
  vol.~49, no.~11, pp. 2910--2917, 2003.

\bibitem{Kalman}
R.~E. K{\'a}lm{\'a}n, ``Realization of covariance sequences,'' in
  \emph{Toeplitz Centennial}.\hskip 1em plus 0.5em minus 0.4em\relax Springer,
  1982, pp. 331--342.

\bibitem{Gthesis}
T.~T. Georgiou, ``Partial realization of covariance sequences,'' Ph.D.
  dissertation, University of Florida, Gainesville, 1983.

\bibitem{Georgiou-87}
------, ``Realization of power spectra from partial covariance sequences,''
  \emph{IEEE Transactions on Acoustics, Speech, and Signal Processing},
  vol.~35, no.~4, pp. 438--449, 1987.

\bibitem{BLGM-95}
C.~I. Byrnes, A.~Lindquist, S.~V. Gusev, and A.~S. Matveev, ``A complete
  parameterization of all positive rational extensions of a covariance
  sequence,'' \emph{IEEE Transactions on Automatic Control}, vol.~40, no.~11,
  pp. 1841--1857, 1995.

\bibitem{BGL-98}
C.~I. Byrnes, S.~V. Gusev, and A.~Lindquist, ``A convex optimization approach
  to the rational covariance extension problem,'' \emph{SIAM Journal on Control
  and Optimization}, vol.~37, no.~1, pp. 211--229, 1998.

\bibitem{SIGEST-01}
------, ``From finite covariance windows to modeling filters: {A} convex
  optimization approach,'' \emph{SIAM Review}, vol.~43, no.~4, pp. 645--675,
  2001.

\bibitem{Georgiou-87-NP}
T.~T. Georgiou, ``A topological approach to {N}evanlinna--{P}ick
  interpolation,'' \emph{SIAM Journal on Mathematical Analysis}, vol.~18,
  no.~5, pp. 1248--1260, 1987.

\bibitem{BGL-01}
C.~I. Byrnes, T.~T. Georgiou, and A.~Lindquist, ``A generalized entropy
  criterion for {N}evanlinna--{P}ick interpolation with degree constraint,''
  \emph{IEEE Transactions on Automatic Control}, vol.~46, no.~6, pp. 822--839,
  2001.

\bibitem{BLN-03}
A.~Blomqvist, A.~Lindquist, and R.~Nagamune, ``Matrix-valued
  {N}evanlinna--{P}ick interpolation with complexity constraint: An
  optimization approach,'' \emph{IEEE Transactions on Automatic Control},
  vol.~48, no.~12, pp. 2172--2190, 2003.

\bibitem{KreinNudelman}
M.~G. Kre\u{\i}n and A.~A. Nudel'man, \emph{The {M}arkov moment problem and
  extremal problems}, ser. Translations of Mathematical Monographs.\hskip 1em
  plus 0.5em minus 0.4em\relax Providence, Rhode Island: American Mathematical
  Society, 1977, vol.~50.

\bibitem{Grenander_Szego}
U.~Grenander and G.~Szeg{\"o}, \emph{Toeplitz forms and their applications},
  ser. California Monographs in Mathematical Sciences.\hskip 1em plus 0.5em
  minus 0.4em\relax University of California Press, 1958.

\bibitem{PavonF-06}
M.~Pavon and A.~Ferrante, ``On the {G}eorgiou--{L}indquist approach to
  constrained {K}ullback--{L}eibler approximation of spectral densities,''
  \emph{IEEE Transactions on Automatic Control}, vol.~51, no.~4, pp. 639--644,
  2006.

\bibitem{FPR-07}
A.~Ferrante, M.~Pavon, and F.~Ramponi, ``Further results on the
  {B}yrnes--{G}eorgiou--{L}indquist generalized moment problem,'' in
  \emph{Modeling, Estimation and Control}.\hskip 1em plus 0.5em minus
  0.4em\relax Springer Berlin Heidelberg, 2007, pp. 73--83.

\bibitem{FRT-11}
A.~Ferrante, F.~Ramponi, and F.~Ticozzi, ``On the convergence of an efficient
  algorithm for {K}ullback--{L}eibler approximation of spectral densities,''
  \emph{IEEE Transactions on Automatic Control}, vol.~56, no.~3, pp. 506--515,
  2011.

\bibitem{B17}
G.~Baggio, ``Further results on the convergence of the {P}avon--{F}errante
  algorithm for spectral estimation,'' \emph{IEEE Trans. Autom. Control, (to
  appear)}, 2018.

\bibitem{Z14rat}
M.~Zorzi, ``Rational approximations of spectral densities based on the alpha
  divergence,'' \emph{Mathematics of Control, Signals, and Systems}, vol.~26,
  no.~2, pp. 259--278, 2014.

\bibitem{Georgiou-06}
T.~T. Georgiou, ``Relative entropy and the multivariable multidimensional
  moment problem,'' \emph{IEEE Transactions on Information Theory}, vol.~52,
  no.~3, pp. 1052--1066, 2006.

\bibitem{FPR-08}
A.~Ferrante, M.~Pavon, and F.~Ramponi, ``Hellinger versus {K}ullback--{L}eibler
  multivariable spectrum approximation,'' \emph{IEEE Transactions on Automatic
  Control}, vol.~53, no.~4, pp. 954--967, 2008.

\bibitem{RFP-09}
F.~Ramponi, A.~Ferrante, and M.~Pavon, ``A globally convergent matricial
  algorithm for multivariate spectral estimation,'' \emph{IEEE Transactions on
  Automatic Control}, vol.~54, no.~10, pp. 2376--2388, 2009.

\bibitem{FMP-12}
A.~Ferrante, C.~Masiero, and M.~Pavon, ``Time and spectral domain relative
  entropy: A new approach to multivariate spectral estimation,'' \emph{IEEE
  Transactions on Automatic Control}, vol.~57, no.~10, pp. 2561--2575, 2012.

\bibitem{GL-17}
T.~T. Georgiou and A.~Lindquist, ``Likelihood analysis of power spectra and
  generalized moment problems,'' \emph{IEEE Transactions on Automatic Control},
  vol.~62, no.~9, pp. 4580--4592, 2017.

\bibitem{Z14}
M.~Zorzi, ``A new family of high-resolution multivariate spectral estimators,''
  \emph{IEEE Transactions on Automatic Control}, vol.~59, no.~4, pp. 892--904,
  2014.

\bibitem{Z15}
------, ``Multivariate spectral estimation based on the concept of optimal
  prediction,'' \emph{IEEE Transactions on Automatic Control}, vol.~60, no.~6,
  pp. 1647--1652, 2015.

\bibitem{PZ-converge-17}
B.~Zhu and G.~Picci, ``Proof of local convergence of a new algorithm for
  covariance matching of periodic {ARMA} models,'' \emph{IEEE Control Systems
  Letters}, vol.~1, no.~1, pp. 206--211, 2017.

\bibitem{Picci-Z-017}
G.~Picci and B.~Zhu, ``Approximation of vector processes by covariance matching
  with applications to smoothing,'' \emph{IEEE Control Systems Letters},
  vol.~1, no.~1, pp. 200--205, 2017.

\bibitem{BL_global_inverse-07}
C.~I. Byrnes and A.~Lindquist, ``Interior point solutions of variational
  problems and global inverse function theorems,'' \emph{International Journal
  of Robust and Nonlinear Control}, vol.~17, no. 5-6, pp. 463--481, 2007.

\bibitem{Zorzi-F-12}
M.~Zorzi and A.~Ferrante, ``On the estimation of structured covariance
  matrices,'' \emph{Automatica}, vol.~48, no.~9, pp. 2145--2151, 2012.

\bibitem{FPZ-12}
A.~Ferrante, M.~Pavon, and M.~Zorzi, ``A maximum entropy enhancement for a
  family of high-resolution spectral estimators,'' \emph{IEEE Transactions on
  Automatic Control}, vol.~57, no.~2, pp. 318--329, 2012.

\bibitem{ning2013geometry}
L.~Ning, X.~Jiang, and T.~Georgiou, ``On the geometry of covariance matrices,''
  \emph{IEEE Signal Processing Letters}, vol.~20, no.~8, pp. 787--790, 2013.

\bibitem{Georgiou-02}
T.~T. Georgiou, ``The structure of state covariances and its relation to the
  power spectrum of the input,'' \emph{IEEE Transactions on Automatic Control},
  vol.~47, no.~7, pp. 1056--1066, 2002.

\bibitem{georgiou2002spectral}
------, ``Spectral analysis based on the state covariance: the maximum entropy
  spectrum and linear fractional parametrization,'' \emph{IEEE transactions on
  Automatic Control}, vol.~47, no.~11, pp. 1811--1823, 2002.

\bibitem{RFP-10-well-posedness}
F.~Ramponi, A.~Ferrante, and M.~Pavon, ``On the well-posedness of multivariate
  spectrum approximation and convergence of high-resolution spectral
  estimators,'' \emph{Systems \& Control Letters}, vol.~59, no.~3, pp.
  167--172, 2010.

\bibitem{OR_degree}
E.~Outerelo and J.~M. Ruiz, \emph{Mapping degree theory}.\hskip 1em plus 0.5em
  minus 0.4em\relax Providence, Rhode Island: American Mathematical Society,
  2009, vol. 108.

\bibitem{JTSchwartz}
J.~T. Schwartz, \emph{Nonlinear functional analysis}.\hskip 1em plus 0.5em
  minus 0.4em\relax CRC Press, 1969.

\bibitem{brookes2005matrix}
M.~Brookes, ``The matrix reference manual,'' \emph{Imperial College London},
  2005.

\bibitem{Bhatia}
R.~Bhatia, \emph{Matrix analysis}, ser. Graduate Texts in Mathematics.\hskip
  1em plus 0.5em minus 0.4em\relax Graduate Texts in Mathematics,
  Springer-Verlag New York, 2013, vol. 169.

\bibitem{avventi2011spectral}
E.~Avventi, ``Spectral moment problems: generalizations, implementation and
  tuning,'' Ph.D. dissertation, KTH Royal Institute of Technology, Stockholm,
  2011.

\bibitem{BF16}
G.~Baggio and A.~Ferrante, ``On the factorization of rational discrete-time
  spectral densities,'' \emph{IEEE Transactions on Automatic Control}, vol.~61,
  no.~4, pp. 969--981, 2016.

\bibitem{Gordon_diffeo-72}
W.~B. Gordon, ``On the diffeomorphisms of {E}uclidean space,'' \emph{The
  American Mathematical Monthly}, vol.~79, no.~7, pp. 755--759, 1972.

\bibitem{KP_imp_fcn_thm}
S.~G. Krantz and H.~R. Parks, \emph{The implicit function theorem: {H}istory,
  theory, and applications}, ser. Modern Birkh\"{a}user Classics.\hskip 1em
  plus 0.5em minus 0.4em\relax Birkh\"{a}user Basel, 2013.

\end{thebibliography}

\end{document}